\makeatletter \@addtoreset{equation}{section} \makeatother
\makeatletter \@addtoreset{enunciato}{section} \makeatother
\newcounter{enunciato}[section]
\newtheorem{ittheorem}{Theorem}
\newtheorem{itlemma}{Lemma}
\newtheorem{itproposition}{Proposition}
\newtheorem{itdefinition}{Definition}
\newtheorem{itremark}{Remark}
\newtheorem{itclaim}{Claim}
\newtheorem{itfact}{Fact}
\newtheorem{itconjecture}{Conjecture}
\newtheorem{itcorollary}{Corollary}
\newcommand{\bb}[1]{{\mathbb #1}}
\newenvironment{theorem}{\addtocounter{enunciato}{1}
\begin{ittheorem}}{\end{ittheorem}}
\newenvironment{lemma}{\addtocounter{enunciato}{1}
\begin{itlemma}}{\end{itlemma}}
\newenvironment{proposition}{\addtocounter{enunciato}{1}
\begin{itproposition}}{\end{itproposition}}
\newenvironment{definition}{\addtocounter{enunciato}{1}
\begin{itdefinition}}{\end{itdefinition}}
\newenvironment{remark}{\addtocounter{enunciato}{1}
\begin{itremark}}{\end{itremark}}
\newenvironment{conjecture}{\addtocounter{enunciato}{1}
\begin{itconjecture}}{\end{itconjecture}}
\newenvironment{corollary}{\addtocounter{enunciato}{1}
\begin{itcorollary}}{\end{itcorollary}}
\newcommand{\be}[1]{\begin{equation}\label{#1}}
\newcommand{\ee}{\end{equation}}
\newcommand{\bl}[1]{\begin{lemma}\label{#1}}
\newcommand{\el}{\end{lemma}}
\newcommand{\br}[1]{\begin{remark}\label{#1}}
\newcommand{\er}{\end{remark}}
\newcommand{\bt}[1]{\begin{theorem}\label{#1}}
\newcommand{\et}{\end{theorem}}
\newcommand{\bd}[1]{\begin{definition}\label{#1}}
\newcommand{\ed}{\end{definition}}
\newcommand{\bp}[1]{\begin{proposition}\label{#1}}
\newcommand{\ep}{\end{proposition}}
\newcommand{\bc}[1]{\begin{corollary}\label{#1}}
\newcommand{\ec}{\end{corollary}}
\newcommand{\bcj}[1]{\begin{conjecture}\label{#1}}
\newcommand{\ecj}{\end{conjecture}}
\def \Z {{\mathbb Z}}
\def \R {{\mathbb R}}
\def \N {{\mathbb N}}
\def \ba {\begin{array}}
\def \ea {\end{array}}
\def \E {{\mathbb E}}
\def\1{\mathbbm{1}}
\begin{document}

\title{Symmetric exclusion \\as a model of non-elliptic\\
 dynamical random conductances}

\author{\renewcommand{\thefootnote}{\arabic{footnote}}
L.\ Avena \footnotemark[1]}

\footnotetext[1]{Institut f\"ur Mathematik, Universit\"at
Z\"urich, Winterthurerstrasse 190, Z\"urich, CH- 8057,
Switzerland. \\ E-mail: luca.avena@math.uzh.ch.}

\maketitle

\begin{abstract}
We consider a finite range symmetric exclusion process on the
integer lattice in any dimension. We interpret it as a
non-elliptic time-dependent random conductance model by setting
conductances equal to one over the edges with end points
occupied by particles of the exclusion process and to zero elsewhere. We
prove a law of large number and a central limit theorem for the
random walk driven by such a dynamical field of conductances by
using the Kipnis-Varhadan martingale approximation. Unlike the
tagged particle in the exclusion process, which is in some sense
similar to this model, this random walk is diffusive even in the
one-dimensional nearest-neighbor symmetric case.

\vspace{0.5cm}\noindent
{\it MSC} 2010. Primary 60K37; Secondary 82C22.\\
{\it Keywords:} Random conductances, law of large
numbers, invariance principle, particle systems.

\end{abstract}
\newpage


\section{Introduction}
\subsection{Model and results}
\label{s1.1}
Let $\Omega = \{0,1\}^{\bb Z^d}$. Denote by $\xi = \{\xi(z); z \in \bb Z^d\}$
the elements of $\Omega$.
For $\xi \in \Omega$ and $y,z \in \bb Z^d$, define $\xi^{y,z} \in \Omega$ as
\[\xi^{y,z}(x) =
\left\{
\begin{array}{cl}
\xi(z), & x=y\\
\xi(y), & x=z\\
\xi(x), & x \neq z,y,
\end{array}
\right.
\]
that is, $\xi^{y,z}$ is obtained from $\xi$ by exchanging the
occupation variables at $y$ and $z$. Fix $R\geq1$. Consider the
transition kernel $p(z,y)$ of a translation-invariant, symmetric,
irreducible random walk with range size $R$, i.\ e.,
$p(0,y-z)=p(z,y)=p(y,z)>0$ iff $|z-y|_1\leq R$, and
$\sum_{y\in\Z^d}p(0,y)=1$. Due to translation invariance we will
denote $p(x):=p(0,x)$.

Let $\{(\xi_t,X_t);t \geq 0\}$ be the Markov process on the state
space $\Omega \times \bb Z^d$ with generator given by
\begin{equation}
\begin{aligned}
L f(\xi,x) &= \sum_{y,z\in\,\bb Z^d} p(z-y)\big[f(\xi^{y,z},x)-f(\xi,x)\big]\\
&+\sum_{y\in\,\bb Z^d}c_{x,y}(\xi)\big[f(\xi,y)-f(\xi,x)\big],
\end{aligned}
\end{equation}
for any local function $f: \Omega \times \bb Z^d \to \bb R$, with
\be{RC}
c_{x,y}(\xi)=
\left\{\begin{array}{ll}
\xi(x)\xi(y) &\mbox{ if } |x-y|_1\leq R ,\\
0 &\mbox{else}.
\end{array}
\right.
\ee

We interpret the dynamics of the process $\{(\xi_t,X_t);t \geq
0\}$ as follows. Checking the action of $L$ over functions $f$
which do not depend on $z$, we see that $\{\xi_t; t \geq 0\}$ has
a Markovian evolution, which corresponds to the well known \emph{
symmetric exclusion process} on $\bb Z^d$, see e.g. \cite{Li85}.
Conditioned on a realization of $\{\xi_t; t \geq 0\}$, the process
$\{X_t; t \geq
0\}$ is a continuous time random walk among the field of dynamical random
conductances
\begin{equation}\{c_{x,y}(\xi_t)=\xi_t(x)\xi_t(y)\1_{\{|x-y|_1\leq R\}} |\, x,y\in\Z^d, t\geq 0\}
.\end{equation}

Our main results are the following law of large numbers and
functional central limit theorem for the random walk $X_t$.
\begin{theorem}{\bf{(LLN)}}\label{LLN}
Assume that the exclusion process $\xi_t$ starts from the Bernoulli product measure $\nu_{\rho}$ 
of density
$\rho\in[0,1]$. Then $X_t/t$ converges a.s. and in $L_1$
to $0$.\end{theorem}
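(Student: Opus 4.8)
The plan is to use the ``environment seen from the walker''. Let $\tau_x$ denote the shift on $\Omega$, $(\tau_x\xi)(z)=\xi(z+x)$, and set $\eta_t:=\tau_{X_t}\xi_t$. Inspecting the action of $L$ on functions of $\eta=\tau_x\xi$ one sees that $\{\eta_t;t\ge0\}$ is a Markov process on $\Omega$ with generator
\[
\mathcal Lg(\eta)=\sum_{y,z\in\Z^d}p(z-y)\big[g(\eta^{y,z})-g(\eta)\big]+\sum_{w:\,|w|_1\le R}\eta(0)\eta(w)\big[g(\tau_w\eta)-g(\eta)\big],
\]
the first sum being the exclusion generator and the second that of the walk with its environment-dependent rates. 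I would first check that $\nu_\rho$ is reversible, hence invariant, for $\mathcal L$: for the exclusion part this is classical, and for the walk part it follows from the translation invariance of $\nu_\rho$, the identity $\eta(0)\eta(w)=(\tau_w\eta)(0)(\tau_w\eta)(-w)$, and the symmetry $w\leftrightarrow -w$ of $\{|w|_1\le R\}$, after the change of variables $\eta\mapsto\tau_w\eta$ in the relevant integrals. Since the exclusion process starts from $\nu_\rho$ and $X_0=0$, we have $\eta_0\sim\nu_\rho$, so $\{\eta_t\}$ is a stationary (reversible) process.

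Next I would prove that $\nu_\rho$ is \emph{ergodic} for $\{\eta_t\}$, i.e.\ that any $g\in L^2(\nu_\rho)$ fixed by the semigroup is a.s.\ constant. By reversibility such a $g$ has vanishing Dirichlet form, and since $-\mathcal L$ is the sum of the (non-negative) exclusion form and the (non-negative) walk form, already the exclusion form of $g$ vanishes, i.e.\ $g(\eta^{y,z})=g(\eta)$ for $\nu_\rho$-a.e.\ $\eta$ whenever $|y-z|_1\le R$. Since $R\ge1$ these exchanges include all nearest-neighbour ones, so they generate all finitely supported permutations of the coordinates; thus $g$ is measurable with respect to the exchangeable $\sigma$-field of the i.i.d.\ family $\{\eta(z)\}_{z\in\Z^d}$ under $\nu_\rho$, which is trivial by the Hewitt--Savage $0$--$1$ law, and $g$ is a.s.\ constant.

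With $\{\eta_t\}$ stationary and ergodic under $\nu_\rho$, write the martingale decomposition $X_t=\int_0^t\phi(\eta_s)\,ds+M_t$, where the local drift
\[
\phi(\eta):=\sum_{w:\,|w|_1\le R}w\,\eta(0)\eta(w)
\]
is a bounded local function and $M_t$ is an $\R^d$-valued martingale whose jumps have size $\le R$ and whose total jump rate is $\le\#\{w:|w|_1\le R\}$, so that $\langle M\rangle_t\le Ct$ with $C=C(R,d)$. Under $\nu_\rho$ one has $\E_{\nu_\rho}[\eta(0)\eta(w)]=\rho^2$ for $w\neq0$, hence $\E_{\nu_\rho}[\phi]=\rho^2\sum_{0\neq w:\,|w|_1\le R}w=0$ by the reflection symmetry of the range. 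The ergodic theorem for the stationary ergodic process $\{\eta_t\}$ then gives $\tfrac1t\int_0^t\phi(\eta_s)\,ds\to\E_{\nu_\rho}[\phi]=0$ a.s., while the law of large numbers for martingales (using $\langle M\rangle_t=O(t)$) gives $M_t/t\to0$ a.s.; adding, $X_t/t\to0$ a.s. Finally, the number of jumps of $X$ up to time $t$ is stochastically dominated by a Poisson variable of mean $\#\{w:|w|_1\le R\}\,t$, so $\{X_t/t\}_{t\ge1}$ is bounded in $L^2$, hence uniformly integrable, and the almost sure convergence upgrades to convergence in $L^1$.

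The step I expect to be the real obstacle is the ergodicity one: showing that inserting the walker does not create non-constant bounded invariant functions for $\{\eta_t\}$. The reduction above brings it down to the (easy) ergodicity of symmetric exclusion under $\nu_\rho$, and it works precisely because of the symmetric product form $c_{x,y}(\xi)=\xi(x)\xi(y)$ of the conductances together with the translation-invariant i.i.d.\ structure of $\nu_\rho$ --- the same features responsible for $\nu_\rho$ being invariant at all. The remaining ingredients (the martingale decomposition, the cancellation $\E_{\nu_\rho}[\phi]=0$, and the upgrade from a.s.\ to $L^1$) are routine.
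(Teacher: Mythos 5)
Your proposal is correct and follows essentially the same route as the paper: the environment seen from the walker, reversibility of $\nu_\rho$ via translation invariance, ergodicity by splitting the Dirichlet form and reducing to the ergodicity of symmetric exclusion, the martingale-plus-additive-functional decomposition of $X_t$, and the cancellation $\E_{\nu_\rho}[\phi]=\rho^2\sum_{|y|_1\le R}y=0$. The only differences are that you fill in details the paper leaves to references (Hewitt--Savage for the exclusion ergodicity, the quadratic-variation bound for the martingale LLN, and the uniform-integrability upgrade to $L^1$), which is fine.
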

\begin{theorem}{\bf{(Annealed functional CLT)}}\label{Functional CLT}
Under the assumptions of Theorem \ref{LLN}, the process
$(\epsilon X_{t/\epsilon^2})$ converges in distribution, as
$\epsilon$ goes to zero, to a non-degenerate Brownian motion
with covariance $\sigma$ in the Skorohod topology.\end{theorem}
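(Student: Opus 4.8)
The plan is to run the Kipnis--Varadhan martingale-approximation scheme, which works cleanly here because the relevant process turns out to be \emph{reversible}. Introduce the environment seen from the walker, $\eta_t := \tau_{X_t}\xi_t$ with $(\tau_x\xi)(z)=\xi(x+z)$. Inspecting $L$ on functions of $\eta$ alone shows $\{\eta_t\}$ is Markovian with generator $\mc L=\mc L^{\SSE}+\mc L^{\RW}$, where $\mc L^{\SSE}$ is the exclusion generator acting on $\eta$ and $\mc L^{\RW}g(\eta)=\sum_{0<|w|_1\le R}\eta(0)\eta(w)[g(\tau_w\eta)-g(\eta)]$. Both pieces are reversible w.r.t.\ $\nu_\rho$: $\mc L^{\SSE}$ by the classical theory of symmetric exclusion, and $\mc L^{\RW}$ because $c_{x,y}=c_{y,x}$ and $\nu_\rho$ is translation invariant (shifting the integration variable by $w$ gives $\langle g,\mc L^{\RW}h\rangle_{\nu_\rho}=\langle h,\mc L^{\RW}g\rangle_{\nu_\rho}$). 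Hence $\mc L$ is self-adjoint and $\le 0$ on $L^2(\nu_\rho)$ and $\nu_\rho$ is stationary for $\eta_t$. Moreover $\nu_\rho$ is ergodic for $\eta_t$: $\mc L f=0$ forces $\langle f,-\mc L^{\SSE}f\rangle_{\nu_\rho}=\langle f,-\mc L^{\RW}f\rangle_{\nu_\rho}=0$ (both summands being $\ge0$), and the first equality gives $f(\eta^{a,b})=f(\eta)$ for all $a,b$ with $p(b-a)>0$; since $p$ is irreducible these exchanges generate the finitary symmetric group of $\Z^d$, so $f$ is $\nu_\rho$-a.s.\ constant by the Hewitt--Savage $0$--$1$ law.

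Since $\{X_t\}$ jumps at bounded rates, one has the decomposition $X_t=M_t+\int_0^t b(\eta_s)\,\di s$ with $b(\eta)=\sum_{0<|w|_1\le R}w\,\eta(0)\eta(w)$ and $M_t$ an $\R^d$-valued $L^2$ martingale, $\langle M^{(i)},M^{(j)}\rangle_t=\int_0^t a_{ij}(\eta_s)\,\di s$, $a_{ij}(\eta)=\sum_{|w|_1\le R}w_iw_j\,\eta(0)\eta(w)$; the symmetry $\sum_{|w|_1\le R}w=0$ makes $b$ mean-zero under $\nu_\rho$. The key estimate is that each $b_i$ lies in $H_{-1}(\mc L)$: the same shift-by-$w$ trick gives, for local $f$, $\langle b_i,f\rangle_{\nu_\rho}=-\tfrac12\sum_{0<|w|_1\le R}w_i\,\E_{\nu_\rho}[\eta(0)\eta(w)(f(\tau_w\eta)-f(\eta))]$, and Cauchy--Schwarz (first in $\eta$ with weight $\eta(0)\eta(w)$, then in the finite sum over $w$) yields $|\langle b_i,f\rangle_{\nu_\rho}|^2\le C_{R,d}\,\langle f,-\mc L^{\RW}f\rangle_{\nu_\rho}\le C_{R,d}\,\langle f,-\mc L f\rangle_{\nu_\rho}$, so $\|b_i\|_{-1}<\infty$.

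Now I apply Kipnis--Varadhan. For each $\ell\in\R^d$, let $\phi_\lambda$ solve $\lambda\phi_\lambda-\mc L\phi_\lambda=\ell\cdot b$; since $\ell\cdot b\in H_{-1}$, $\lambda\|\phi_\lambda\|_{\nu_\rho}^2\to0$ and $\phi_\lambda$ converges in the $H_{+1}$-norm, which produces a square-integrable martingale $\widehat M^{(\ell)}_t$ with stationary increments such that $\ell\cdot X_t=\widehat M^{(\ell)}_t+\mc R_t$ with $\epsilon\,\mc R_{t/\epsilon^2}\to0$ uniformly on compacts in probability. Its predictable quadratic variation is $\int_0^t\widehat a^{(\ell)}(\eta_s)\,\di s$ with $\widehat a^{(\ell)}(\eta)=\sum_{|w|_1\le R}\eta(0)\eta(w)(\ell\cdot w+\phi(\tau_w\eta)-\phi(\eta))^2$, so by ergodicity $t^{-1}\langle\widehat M^{(\ell)}\rangle_t\to\E_{\nu_\rho}[\widehat a^{(\ell)}]=:\ell\cdot\sigma\ell$ a.s. The martingale functional CLT (applied to the bounded-corrector approximants and passed to the limit in $\lambda$) then gives $\epsilon\,\widehat M^{(\ell)}_{t/\epsilon^2}\Rightarrow$ Brownian motion of variance $\ell\cdot\sigma\ell$ in the Skorohod topology; the Cram\'er--Wold device together with tightness (from the one-dimensional statements and the bounded jumps of $X$) upgrades this to $\epsilon X_{t/\epsilon^2}\Rightarrow$ Brownian motion with covariance matrix $\sigma$, the absence of a ballistic term being Theorem~\ref{LLN}.

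The main obstacle is the non-degeneracy of $\sigma$, i.e.\ $\ell\cdot\sigma\ell>0$ for $\ell\ne0$, and this must be proved even for $d=1$ nearest-neighbour, which is precisely the contrast with the tagged particle. From the variational formula $\ell\cdot\sigma\ell=\inf_g\E_{\nu_\rho}[\sum_{|w|_1\le R}\eta(0)\eta(w)(\ell\cdot w+g(\tau_w\eta)-g(\eta))^2]$ over local $g$, degeneracy would require an approximate corrector turning $\ell\cdot w$ into a gradient along the occupied bonds of $\eta$; for \emph{static} $\{0,1\}$-conductances below the percolation threshold this does occur (the walk is confined), but here the exclusion dynamics keeps reconnecting the occupied set. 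To prove it I would lower-bound the averaged energy rather than argue at fixed $\eta$: pairing the $\pm w$ contributions and using the product structure of $\nu_\rho$, one reduces to showing that for some $w_0$ with $\ell\cdot w_0\ne0$ the weighted conditional increment of any $g\in L^2(\nu_\rho)$ across the swap $\eta\mapsto\tau_{w_0}\eta$ cannot systematically cancel the constant $-\ell\cdot w_0$; equivalently, that $\ell\cdot M_t$ and $\int_0^t\ell\cdot b(\eta_s)\,\di s$ are not perfectly anti-correlated as $t\to\infty$, which uses $\E_{\nu_\rho}[\ell\cdot a\,\ell]=\rho^2\sum_{|w|_1\le R}(\ell\cdot w)^2>0$ together with a non-degeneracy property of their joint law supplied by the exclusion randomness. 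Beyond this point, the remaining work — checking the Kipnis--Varadhan hypotheses in detail and the uniform-in-$\lambda$ estimates needed for the invariance principle — is standard.
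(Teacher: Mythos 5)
Your setup — the environment $\eta_t=\tau_{X_t}\xi_t$, its reversibility and ergodicity under $\nu_\rho$, the decomposition $X_t=M_t+\int_0^t\phi(\eta_s)\,ds$, the $\mathcal H_{-1}$ bound via the random-conductance part of the Dirichlet form, and the Kipnis--Varadhan approximation — matches the paper's route essentially step for step. But the non-degeneracy of $\sigma$, which you yourself call ``the main obstacle'' and which is the actual content of the theorem (it is exactly what fails for the one-dimensional nearest-neighbour tagged particle), is not proved in your proposal: the final paragraph only restates the goal (``cannot systematically cancel'', ``a non-degeneracy property of their joint law supplied by the exclusion randomness'') without supplying an argument. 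Moreover the variational expression you write for $\ell\cdot\sigma\ell$ is incomplete: the limiting variance of $M_1\cdot\ell+\tilde M_1(\lambda,\ell)$ contains, in addition to the term $\E_{\nu_\rho}\big[\sum_y c_{0,y}(\eta)\,(\ell\cdot y+f_\lambda(\tau_y\eta)-f_\lambda(\eta))^2\big]$, the exclusion contribution $2D_{se}(f_\lambda)$ coming from the jump martingales $N^{y,z}_t$ of the environment, which are orthogonal to the walk-jump martingales $M^y_t$. Dropping that term is not harmless — it is precisely the piece that rescues positivity.

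The missing idea is the paper's second Dirichlet estimate: $|\langle\phi\cdot\ell,f\rangle_{\nu_\rho}|\le K\,D_{se}(f)^{1/2}$, i.e.\ $\phi\cdot\ell$ is controlled by the \emph{symmetric-exclusion} Dirichlet form alone. This is proved by writing $c_{0,y}-c_{0,-y}=\eta(0)[\eta(y)-\eta(-y)]$, telescoping $\eta(y)-\eta(-y)=\sum_i[\eta(z_i)-\eta(z_{i-1})]$ along a path with $p(z_i-z_{i-1})>0$ (irreducibility of $p$), and converting each increment into an exchange $f(\eta^{z_{i-1},z_i})-f(\eta)$ under $\nu_\rho$. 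Combined with $D_{ew}(f_\lambda)\le|\langle\phi\cdot\ell,f_\lambda\rangle_{\nu_\rho}|\le K\,D_{se}(f_\lambda)^{1/2}$ one gets a dichotomy: either $D_{ew}(f_\lambda)$ stays bounded away from $0$, in which case so does $D_{se}(f_\lambda)$ and the variance is positive through the $2D_{se}$ term; or $D_{ew}(f_\lambda)\to0$, in which case $\E_{\nu_\rho}[\tilde M_1(\lambda,\ell)^2]=\|f_\lambda\|_1^2\to0$ and the limiting variance reduces to $\E_{\nu_\rho}[(M_1\cdot\ell)^2]=\rho^2\sum_{|y|_1\le R}(y\cdot\ell)^2>0$. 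Without this lemma (or a substitute of comparable strength) your proof of the theorem as stated — in particular the word ``non-degenerate'' — is incomplete.
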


\subsection{Motivation}
Random walks in random media represents one of the main research
area within the field of disordered system of particles. The aim
is to understand the motion of a particle in a inhomogeneous
medium. This is clearly interesting for applied purposes and has
turned out to be a very challenging mathematical program. Lots of
effort has been made in recent years in this direction. We refer
to \cite{Sz02,Ze06} for recent overviews of rigorous results in
this subject.

One of the easiest models of random walk in random media is
represented by a random walk among (time-independent) random
conductances. This model turned out to be relatively simple due
the reversibility properties of the walker. In fact, the behavior
of such random walks has been recently analyzed and understood in
quite great generality. See, e.\ g.\ ,\cite{Bi11} for a recent overview
and references therein.
When considering a field of dynamical random conductances, the
mentioned reversibility of the random walk is lost, and other types
of techniques are needed. In the recent paper \cite{An12}, 
annealed and quenched invariance principles for
a random walk in a field of time-dependent random
conductances have been derived by assuming fast enough space-time
mixing conditions and uniform ellipticity for the field. In
particular, the uniform ellipticity, which guarantees heat kernel
estimates, is a crucial assumption in their approach even for the annealed 
statement(ellipticity plays a fundamental role also in the analysis 
of other random walks in random environments). 
The model we consider represents a first solvable example of non-elliptic 
time-dependent random conductances. Moreover it strengthens the connection between
particle systems theory and the theory of random walks in random media. To
overcome the loss of ellipticity we use the ``good'' properties of
the symmetric exclusion in equilibrium.

The proof of our results rely on the martingale approximation
method developed by Kipnis and Varhadan \cite{KiVa86} for additive
functionals of reversible Markov processes. In the original paper
\cite{KiVa86}, the authors apply their method to study a tagged
particle in the exclusion process. Indeed, this latter has some
similarities with our model, and our proof is essentially an
adaptation of their proof. Unlike the tagged particle behavior,
our random walk is always diffusive even in the one dimensional
nearest-neighbors case.

\section{Proofs of the LLN and of the invariance principle}

\subsection{The environment from the position of the walker}
\label{1.2} Consider the process $\{\eta_t; t \geq 0\}$ with
values in $\Omega$, defined by $\eta_t= \tau_{X_t}\xi_t$, where 
$\tau_y$ denotes the shift operator on $\Omega$ (i.\ e.\
$\eta_t(z)= \xi_t(z+X_t)$). The process $\{\eta_t; t \geq 0\}$ is
usually called the {\em environment seen by the random walk}. For
$\eta \in \Omega$, the process $\{\eta_t; t \geq 0\}$ is also
Markovian with generator:
\begin{equation}
\begin{aligned}\label{generators}
\mathcal{L}_{ew}f(\eta) &= \sum_{z,y}
p(z-y)\big[f(\eta^{y,z})-f(\eta)\big] +
\sum_{y}c_{0,y}(\eta)\big[f(\tau_y\eta)-f(\eta)\big]\\
&=:\mathcal{L}_{se}f(\eta)+\mathcal{L}_{rc}f(\eta).
\end{aligned}
\end{equation}
for any local function $f: \Omega \to \bb R$. The choice of the
subindexes in the generators above is just for notational
convenience: ``$ew$'', ``$se$'' and ``$rc$'', stand for, Environment from the point
of view of the Walker, Symmetric Exclusion and Random
Conductances, respectively.

For any function $f,g: \Omega \to \bb R$, we denote the inner
product in $L_2=L_2(\nu_\rho)$ by
$$\langle f,g\rangle_{\nu_\rho}:=\int_\Omega d\nu_\rho f(\eta)g(\eta),$$
where $\nu_{\rho}$ is the Bernoulli product measure of density
$\rho\in[0,1]$. In particular, it is well known that the family
$\{\nu_{\rho}: \rho\in(0,1)\}$ fully characterizes the set of
extremal invariant measures for the symmetric exclusion process,
and $\mathcal{L}_{se}$ is self-adjoint in $L_2$ (see e.g. \cite{Li85}).
The next lemma shows that the same hold
for the environment as seen by the walker.
Before proving it, we define the
Dirichlet forms associated to the generators involved in
\eqref{generators} as
\begin{equation}
D_a(f):=\langle f, -\mathcal{L}_{a}f\rangle_{\nu_\rho} \text{ with
} a\in\{ew,se,rc\},
\end{equation}
for $f$ in $L_2$.
It follows by a standard computation (cf. \cite{KiLa99}, Prop. 10.1 P.343) that

\begin{equation}
\begin{aligned}\label{DirichletProp}
D_{ew}(f)=&D_{se}(f)+D_{rc}(f)= \frac{1}{2}
\sum_{z,y}
\int d\nu_\rho\, p(z-y)\big[f(\eta^{y,z})-f(\eta)\big]^2
\\&+\frac{1}{2}\sum_{y}\int d\nu_\rho\, c_{0,y}(\eta)\big[f(\tau_y\eta)-f(\eta)\big]^2.
\end{aligned}
\end{equation}

\begin{lemma}\label{ergodicityEP}
The process $\eta_t$ is reversible and ergodic with respect to the
the Bernoulli product measure $\nu_\rho$.
\begin{proof}
We first show that $\mathcal{L}_{ew}$ is self-adjoint in $L_2$, namely
,$\langle f,\mathcal{L}_{ew}g\rangle_{\nu_\rho}=
\langle \mathcal{L}_{ew}f,g\rangle_{\nu_\rho}$,
with $f,g$ arbitrary functions.

By translation invariance, we have
\begin{equation}\begin{aligned}
\langle f,\mathcal{L}_{rc}g\rangle_{\nu_\rho}&= \sum_y\int d\nu_\rho\, f(\eta)
\left[g(\tau_y\eta)-g(\eta)\right]c_{0,y}(\eta)\\
&=\sum_y\left(\int d\nu_\rho\, f(\tau_{-y}\eta)g(\eta)
c_{0,-y}(\eta)-\int d\nu_\rho\, f(\eta)g(\eta)c_{0,y}(\eta)\right)\\
&=\sum_y\left(\int d\nu_\rho \, f(\tau_{y}\eta)g(\eta)c_{0,y}(\eta)
-\int d\nu_\rho \, f(\eta)g(\eta)c_{0,y}(\eta)\right)=\langle \mathcal{L}_{rc}f,g\rangle_{\nu_\rho}.
\end{aligned}
\end{equation}
Together with the fact that $\mathcal{L}_{se}$ is also
self-adjoint, we get
\begin{equation*}\langle f,\mathcal{L}_{ew}g\rangle_{\nu_\rho}=\langle f,\mathcal{L}_{se}g\rangle
_{\nu_\rho}+\langle f,\mathcal{L}_{rc}g\rangle_{\nu_\rho}=\langle
\mathcal{L}_{se}f,g\rangle_{\nu_\rho} +\langle
\mathcal{L}_{rc}f,g\rangle_{\nu_\rho}=\langle \mathcal{L}_{ew}f,g\rangle_{\nu_\rho}.
\end{equation*}

It remains to show the ergodicity. Following the argument in \cite{KiVa86},
 we show that any harmonic function $h$ such that
$\mathcal{L}_{ew}h=0$ is $\nu_\rho$-a.\ s.\ constant.

Indeed  $\mathcal{L}_{ew}h=0$ implies that $D_{se}(h)=-D_{rc}(h)$.
Since the Dirichlet forms are non-negative, then
$D_{se}(h)=0=D_{rc}(h)$, but $\mathcal{L}_{se}$ is reversible and ergodic,
hence $h$ must be $\nu_\rho$-a.\ s.\ constant.
\end{proof}
\end{lemma}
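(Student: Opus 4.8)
The plan is to establish the two assertions of the lemma---reversibility and ergodicity---separately, following the structure already sketched in the Kipnis--Varadhan approach to the tagged particle. For reversibility it suffices to show that the full generator $\mathcal{L}_{ew}$ is self-adjoint in $L_2(\nu_\rho)$. Since $\mathcal{L}_{ew}=\mathcal{L}_{se}+\mathcal{L}_{rc}$ and $\mathcal{L}_{se}$ is already known to be self-adjoint with respect to every Bernoulli product measure (this is classical for the symmetric exclusion process), the only thing to check is that $\mathcal{L}_{rc}$ is self-adjoint. For this I would compute $\langle f,\mathcal{L}_{rc}g\rangle_{\nu_\rho}$ directly, expand the sum over $y$, and perform the change of variables $\eta\mapsto\tau_{-y}\eta$ in the first term. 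Here the key point is that $\nu_\rho$ is translation invariant, so the measure is preserved under the shift; moreover $c_{0,y}(\tau_{-y}\eta)=\tau_{-y}\eta(0)\,\tau_{-y}\eta(y)\1_{\{|y|_1\le R\}}=\eta(-y)\eta(0)\1_{\{|y|_1\le R\}}=c_{0,-y}(\eta)$, and using that $p$ and the conductance range are symmetric one re-indexes the sum $y\mapsto-y$ to land exactly on $\langle\mathcal{L}_{rc}f,g\rangle_{\nu_\rho}$. Adding the two self-adjoint pieces gives self-adjointness of $\mathcal{L}_{ew}$, hence reversibility of $\eta_t$ with respect to $\nu_\rho$.

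For ergodicity I would use the standard characterization: $\nu_\rho$ is ergodic for $\eta_t$ iff every $h\in L_2(\nu_\rho)$ with $\mathcal{L}_{ew}h=0$ is $\nu_\rho$-a.s.\ constant. Given such an $h$, pair the equation with $h$ itself: $0=\langle h,-\mathcal{L}_{ew}h\rangle_{\nu_\rho}=D_{ew}(h)=D_{se}(h)+D_{rc}(h)$. Because both Dirichlet forms in \eqref{DirichletProp} are manifestly non-negative (they are sums of squares against non-negative weights), this forces $D_{se}(h)=0$. Now invoke the known ergodicity of the symmetric exclusion process with respect to $\nu_\rho$: a function with vanishing exclusion Dirichlet form is a.s.\ constant. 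Therefore $h$ is constant, which is what we wanted.

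The main subtlety---modest, but the place where one must be careful---is the change-of-variables computation for $\mathcal{L}_{rc}$: one has to track that the shift acts correctly both on the argument of $g$ and inside the conductance $c_{0,y}$, and that after relabeling $y\mapsto-y$ the constraint $|y|_1\le R$ and the symmetry of the interaction combine so that no stray terms remain. A second point worth stating explicitly is why $D_{se}(h)=0\Rightarrow h$ constant: this is precisely the ergodicity of $\mathcal{L}_{se}$ under $\nu_\rho$, equivalently the fact that the only functions invariant under all nearest-exchange moves (with positive $p$-rates, using irreducibility of $p$) are constants on the support of $\nu_\rho$; this is the one ingredient imported wholesale from the exclusion-process literature. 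Everything else is bookkeeping with the explicit Dirichlet forms already recorded in \eqref{DirichletProp}.
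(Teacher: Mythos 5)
Your proposal is correct and follows essentially the same route as the paper: self-adjointness of $\mathcal{L}_{rc}$ via the change of variables $\eta\mapsto\tau_{-y}\eta$ and re-indexing $y\mapsto -y$ under the translation-invariant measure $\nu_\rho$, combined with the classical self-adjointness of $\mathcal{L}_{se}$, and then ergodicity by pairing $\mathcal{L}_{ew}h=0$ with $h$ to force $D_{se}(h)=D_{rc}(h)=0$ and invoking the known ergodicity of the symmetric exclusion process. No gaps.
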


\subsection{Proof of Theorem \ref{LLN}}
We now express the position of the RW $X_t$ in terms of the
process $\eta_t$. For $y\in\Z^d$, let $J_t^y$ denote the number of
spatial shifts in direction $y$ of the process $\eta_t$ up to time
$t$. Then
\begin{equation}\label{RWfromShift}
 X_t=\sum_y yJ_t^y.
\end{equation}

By compensating the process $J_t^y$ by its intensity $\int_0^t
c_{0,y}(\eta_s) d s$, it is standard to check that
\begin{equation}\label{CompoundPP}
M_t^y:=J_t^y-\int_0^t d s \,c_{0,y}(\eta_s) \quad\text{and}\quad (M_t^y)^2-\int_0^t d s \,c_{0,y}(\eta_s)
\end{equation}
are martingales with stationary increments vanishing at $t=0$.

Next, define
\begin{equation}\label{functionalDef}M_t:=\sum_y yM_t^y\quad\text{ and }\quad
\phi(\eta_s):= \sum_y yc_{0,y}(\eta_s),
\end{equation}

by combining \eqref{RWfromShift} and \eqref{CompoundPP}, we obtain
\begin{equation}\label{RWrepresentation}
 X_t=M_t+\int_0^t d s \,\phi(\eta_s),
\end{equation}
from which we easily obtain the law of large numbers in Theorem
\ref{LLN}.
Indeed, due to Lemma \ref{ergodicityEP}, the representation in
\eqref{RWrepresentation} express $X_t$ as a sum of a zero-mean
martingale with stationary and ergodic increments $M_t$, plus the
term $\int_0^t d s \,\phi(\eta_s)$, which by the ergodic theorem,
when divided by $t$, it converges to its average
$$\E_{\nu_\rho}[\phi(\eta)]=\sum_{|y|_1\leq R}y
\int d\nu_\rho\, \eta(0)\eta(y)=\rho^2\sum_{|y|_1\leq R}y=0.$$
\subsection{Proof of Theorem \ref{Functional CLT}}
Next, we want to prove a functional CLT for the process
$X_t$. To this aim we will use again the representation
in \eqref{RWrepresentation} and the well known Kipnis-Varhadan
method \cite{KiVa86} for additive functional of reversible Markov
processes. Indeed, $\int_0^t d s \,\phi(\eta_s)$ in
\eqref{RWrepresentation} is an additive functional of the
reversible process $\eta_t$. To recall briefly the Kipnis-Varadhan
method, we first introduce the Sobolev spaces $\mathcal{H}_1$ and
$\mathcal{H}_{-1}$ associated to a generator $\mathcal{L}$. Let
$\mathcal{D}(\mathcal{L})$ be the domain of this generator.
Consider in $\mathcal{D}(\mathcal{L})$, the equivalence relation
$\mathtt{\sim}_1$ defined as $f\mathtt{\sim}_1g$ if $\|f-g\|_1=0$,
where $\|\cdot\|_1$ is the semi-norm given by
\begin{equation}\label{1-norm}
\|f\|_1^2:=\langle f, -\mathcal{L}f\rangle_{\nu_\rho}.
\end{equation}
Define the space $\mathcal{H}_1$ as the completion of the normed
space $(\mathcal{D}(\mathcal{L})|_{\mathtt{\sim}_1},\|\cdot\|_1)$.
It can be check that $\mathcal{H}_1$ is a Hilbert space with inner
product $\langle f,g\rangle_1:=\langle
f,-\mathcal{L}g\rangle_{\nu_{\rho}}$. Next, for $f\in L_2$, let
\begin{equation}\label{-1-norm}
\|f\|_{-1}:=\sup \left\{\frac{\langle
f,g\rangle_{\nu_{\rho}}}{\|g\|_1}: g\in L_2, \|g\|_1\neq 0 \right\}.
\end{equation}
Consider $\mathcal{G}_{-1}:=\{f\in L_2 : \|f\|_{-1}<\infty\}$. As
for the $\|\cdot\|_1$ norm, define the equivalence relation
$\mathtt{\sim}_{-1}$, and let $\mathcal{H}_{-1}$ be the completion
of the normed space
$(\mathcal{G}_{-1}|_{\mathtt{\sim}_1},\|\cdot\|_1)$.
$\mathcal{H}_{-1}$ is the dual of $\mathcal{H}_{1}$ and it is also
a Hilbert space.
Theorem 1.8 in \cite{KiVa86} states that, if $\mathcal{L}$ is
self-adjoint and $\phi\in \mathcal{H}_{-1}$ (which we prove in the
next lemma), then there exists a square integrable martingale
$\tilde{M}_t$ and an error term $E_t$ such that
\begin{equation}\label{MartingaleDec}
\int_0^t d s \,\phi(\eta_s)= \tilde{M}_t+E_t,
\end{equation}
and $|E_t|/\sqrt{t}$ converges to zero in $L_2$.

In particular, denoting by $\cdot$ the
standard inner product and considering a vector $l$ in $\R^d$,
the martingale $\tilde{M_t}\cdot l$ in \eqref{MartingaleDec} is obtained as the limit
as $\lambda\rightarrow 0$ of the martingale
\begin{equation}\label{lambdaMartingale}\tilde{M}_t(\lambda,l):= f_\lambda(\eta_t)-f_\lambda(\eta_0)-
\int_0^t ds \,\mathcal{L}f_\lambda(\eta_s),
\end{equation}
where $f_\lambda$ is the solution of the resolvent equation
\begin{equation}\label{resolvent}
 (\lambda I-\mathcal{L})f_\lambda= \phi\cdot l.
\end{equation}
Moreover
\begin{equation}\label{M1lambda}
\E_{\nu_{\rho}}[\tilde{M}_1(\lambda,l)^2]=\|f_{\lambda}\|_1^2.
\end{equation}

We are now ready to show the crucial estimate which, in view of
what we said, by Theorem 1.8 in \cite{KiVa86}, implies the
decomposition in \eqref{MartingaleDec}.

\begin{lemma}
There exists a constant $K>0$ such that, for any function
$f\in\mathcal{D}(\mathcal{L}_{ew})$ and $l$ in $\R^d$,
\begin{equation}\label{Dirichlet1}|\langle \phi\cdot l,f\rangle_{\nu_\rho}|\leq K\, D_{ew}
(f)^{1/2}.\end{equation}
\begin{proof}

Recall \eqref{functionalDef} and estimate
\begin{equation}\begin{aligned} \left|\langle \phi \cdot l,f\rangle_{\nu_\rho}\right|&= \left|\int d\nu_\rho \sum_y (y\cdot l)
c_{0,y}(\eta)f(\eta)\right|=\left|\frac{1}{2}\int d\nu_\rho \sum_y (y\cdot l) \left[c_{0,y}(\eta)
-c_{0,-y}(\eta)\right]f(\eta)\right|\\
&=\left|\frac{1}{2}\int d\nu_\rho \sum_y (y\cdot l) c_{0,y}(\eta)\left[f(\tau_{y}\eta)
-f(\eta)\right]\right|\\&\leq \frac{1}{2}\left(\sum_y (y\cdot l)^2
c_{0,y}(\eta)\right)^{1/2}\left(
\int d\nu_\rho \sum_y c_{0,y}(\eta)\left[f(\tau_{y}\eta)-f(\eta)\right]^2\right)^{1/2}\\
&\leq K\, D_{rc}(f)^{1/2}\leq K\, D_{ew}(f)^{1/2},
\end{aligned}\end{equation}
where we have used translation invariance,
$c_{0,y}(\eta)^2=c_{0,y}(\eta)$, Cauchy-Schwartz, the finite range
assumption on $p(\cdot)$, and the representation of the Dirichlet
forms in \eqref{DirichletProp}, respectively.
\end{proof}
\end{lemma}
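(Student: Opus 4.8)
The plan is to bound the linear functional $\langle \phi\cdot l, f\rangle_{\nu_\rho}$ directly by the random-conductance part $D_{rc}(f)^{1/2}$ of the Dirichlet form, and then use $D_{rc}(f)\le D_{ew}(f)$ from \eqref{DirichletProp}. The key algebraic point is that $\phi(\eta)=\sum_y y\,c_{0,y}(\eta)$ is, up to an antisymmetrization, a \emph{gradient}: since $p(\cdot)$ is symmetric and $c_{0,y}(\eta)=\eta(0)\eta(y)$ depends on $y$ through the unordered pair $\{0,y\}$, the sum $\sum_y (y\cdot l)\,c_{0,y}(\eta)$ can be rewritten as $\tfrac12\sum_y (y\cdot l)\,[c_{0,y}(\eta)-c_{0,-y}(\eta)]$. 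Using translation invariance to move $-y$ to $y$ inside the integral, this becomes $\tfrac12\int d\nu_\rho\sum_y (y\cdot l)\,c_{0,y}(\eta)\,[f(\tau_y\eta)-f(\eta)]$, which is exactly the shape needed to pair with the Dirichlet form $D_{rc}$.

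First I would make the antisymmetrization rigorous: starting from $\int d\nu_\rho\,\sum_y (y\cdot l)\,c_{0,y}(\eta)f(\eta)$, add and subtract the $-y$ term and change variables $\eta\mapsto\tau_{-y}\eta$ in one copy, noting $\nu_\rho$ is shift-invariant and $c_{0,-y}(\tau_{-y}\eta)=c_{0,y}(\eta)$. Next I would apply the Cauchy--Schwarz inequality in the measure $d\nu_\rho\otimes(\text{counting in }y)$, splitting the integrand $ (y\cdot l)\,c_{0,y}(\eta)\,[f(\tau_y\eta)-f(\eta)] = \big((y\cdot l)\,c_{0,y}(\eta)^{1/2}\big)\cdot\big(c_{0,y}(\eta)^{1/2}[f(\tau_y\eta)-f(\eta)]\big)$; here I use $c_{0,y}(\eta)^2=c_{0,y}(\eta)$ since $c_{0,y}\in\{0,1\}$. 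The second factor squared and summed is precisely $2D_{rc}(f)$ by \eqref{DirichletProp}. For the first factor, $\sum_y (y\cdot l)^2 c_{0,y}(\eta)\le |l|^2\sum_{|y|_1\le R}|y|^2 =: (2K)^2$ is a finite deterministic constant because $p(\cdot)$ — and hence $c_{0,y}$ — has range $R$; integrating against $d\nu_\rho$ keeps it bounded. Combining gives $|\langle\phi\cdot l,f\rangle_{\nu_\rho}|\le K\,D_{rc}(f)^{1/2}\le K\,D_{ew}(f)^{1/2}$.

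There is no real obstacle here; the estimate is essentially the same Cauchy--Schwarz-against-the-Dirichlet-form computation that appears in the Kipnis--Varadhan treatment of the tagged particle, and the only thing one must be careful about is that the bound is \emph{uniform} in $\eta$ — which is exactly where the finite-range (non-elliptic but bounded) structure of the conductances is used, rather than any ellipticity lower bound. In particular the constant $K$ can be taken as $\tfrac12|l|\big(\sum_{|y|_1\le R}|y|^2\big)^{1/2}$, and for the statement as phrased one absorbs the fixed $l$ into $K$. Finally, note $\phi\cdot l\in L_2(\nu_\rho)$ trivially since it is a bounded local function, so $\|\phi\cdot l\|_{-1}<\infty$ follows from \eqref{Dirichlet1} by the definition \eqref{-1-norm}, which is what is needed to invoke Theorem 1.8 of \cite{KiVa86}.
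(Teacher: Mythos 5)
Your argument is correct and follows essentially the same route as the paper: antisymmetrize $\sum_y (y\cdot l)c_{0,y}$ in $y$, use shift invariance of $\nu_\rho$ to turn the difference into $c_{0,y}(\eta)[f(\tau_y\eta)-f(\eta)]$, apply Cauchy--Schwarz against $D_{rc}(f)$, and bound the remaining factor by a deterministic constant using the finite range $R$ and $c_{0,y}^2=c_{0,y}$. The only quibble is a sign-convention slip: with the paper's convention $\tau_y\eta(z)=\eta(z+y)$ the relevant substitution is $\eta\mapsto\tau_y\eta$, which gives $c_{0,-y}(\tau_y\eta)=\eta(y)\eta(0)=c_{0,y}(\eta)$, whereas $c_{0,-y}(\tau_{-y}\eta)$ as you wrote it would be $\eta(-y)\eta(-2y)$; this does not affect the validity of the estimate.
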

In view of the discussion above, from \eqref{RWrepresentation} and \eqref{MartingaleDec}, we have that
\begin{equation}\label{RWrepresentation2}
X_t=M_t+\tilde{M}_t+o(\sqrt{t}).
\end{equation}
Since the sum of two martingales is again a martingale, the
functional CLT for $X_t$ follows immediately from the standard
functional CLT for martingales provided that we prove the
non-degeneracy of the covariance matrix of the martingale given by
$M_t+\tilde{M}_t$. Roughly speaking, we have to prove that $M_t$ and
$\tilde{M}_t$ do not cancel each other. This is the content of the
next proposition which concludes the proof of Theorem
\ref{Functional CLT}.

\begin{proposition} The sum of the two martingales $M_t+\tilde{M}_t$ is a
non-degenerate martingale.
\end{proposition}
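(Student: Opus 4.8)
The goal is to show the symmetric matrix $\sigma$ is positive definite, i.e.\ $l\cdot\sigma l>0$ for every $l\in\R^d\setminus\{0\}$. Since $M_t$ and $\tilde M_t$ both have stationary increments (the former by \eqref{CompoundPP}, the latter by the Kipnis--Varadhan construction), so does $M_t+\tilde M_t$, and the martingale functional CLT reduces non-degeneracy of $\sigma$ to showing $\E_{\nu_\rho}[((M_1+\tilde M_1)\cdot l)^2]>0$ for all $l\neq0$. The plan is to obtain this from the reverse triangle inequality in $L_2(\nu_\rho)$, by proving that the corrector martingale $\tilde M_1\cdot l$ is \emph{strictly shorter}, in $L_2(\nu_\rho)$, than the jump martingale $M_1\cdot l$, so that they cannot cancel.

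First I would compute the length of $M_1\cdot l$. By \eqref{functionalDef}, $M_t\cdot l=\sum_y(y\cdot l)M_t^y$, and $M_t^y$ is the compensated counter of the direction-$y$ shifts, which increases by $+1$ at rate $c_{0,y}(\eta_t)$; distinct $M^y$'s jump at disjoint times, hence are orthogonal, so the angle bracket of $M_t\cdot l$ is $\int_0^t\sum_y(y\cdot l)^2c_{0,y}(\eta_s)\,ds$ and, by stationarity of $\eta_t$ under $\nu_\rho$,
\[
\E_{\nu_\rho}[(M_1\cdot l)^2]=\E_{\nu_\rho}\Big[\textstyle\sum_y(y\cdot l)^2c_{0,y}(\eta)\Big]=\rho^2\!\!\sum_{|y|_1\le R}(y\cdot l)^2=:v_l ,
\]
which is strictly positive for $l\neq0$ (and $\rho>0$), since the standard basis vectors have $|e_i|_1=1\le R$ and $e_i\cdot l=l_i$.

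Next I would bound the length of $\tilde M_1\cdot l$. By \eqref{M1lambda}, $\E_{\nu_\rho}[\tilde M_1(\lambda,l)^2]=\|f_\lambda\|_1^2$. The resolvent equation \eqref{resolvent} gives $\langle\phi\cdot l,f_\lambda\rangle_{\nu_\rho}=\lambda\|f_\lambda\|_{L_2}^2+\|f_\lambda\|_1^2\ge\|f_\lambda\|_1^2$, and combining this with the key estimate \eqref{Dirichlet1}, whose constant is (as the proof of that lemma shows) $K=(v_l/2)^{1/2}$, yields $\|f_\lambda\|_1^2\le\langle\phi\cdot l,f_\lambda\rangle_{\nu_\rho}\le K\,D_{ew}(f_\lambda)^{1/2}=K\|f_\lambda\|_1$, hence $\|f_\lambda\|_1\le K$ for every $\lambda>0$. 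Letting $\lambda\to0$ and using $\tilde M_1(\lambda,l)\to\tilde M_1\cdot l$ in $L_2(\nu_\rho)$, one gets $\E_{\nu_\rho}[(\tilde M_1\cdot l)^2]=\lim_{\lambda\to0}\|f_\lambda\|_1^2\le K^2=v_l/2$. The reverse triangle inequality then gives
\[
\E_{\nu_\rho}[((M_1+\tilde M_1)\cdot l)^2]^{1/2}\ge\E_{\nu_\rho}[(M_1\cdot l)^2]^{1/2}-\E_{\nu_\rho}[(\tilde M_1\cdot l)^2]^{1/2}\ge\sqrt{v_l}-\sqrt{v_l/2}=\big(1-\tfrac1{\sqrt2}\big)\sqrt{v_l}>0 ,
\]
so $l\cdot\sigma l\ge(1-\tfrac1{\sqrt2})^2\rho^2\sum_{|y|_1\le R}(y\cdot l)^2>0$ for all $l\neq0$, which is the claim.

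The step I expect to be the main obstacle — really the heart of the matter — is pinning down that the constant in \eqref{Dirichlet1} is genuinely $K=(v_l/2)^{1/2}$, with the honest factor $\tfrac12$ inside: it is exactly this gap that produces $\|\tilde M_1\cdot l\|_{L_2}\le\tfrac1{\sqrt2}\|M_1\cdot l\|_{L_2}<\|M_1\cdot l\|_{L_2}$ and so forbids cancellation. The factor comes from the antisymmetrization $c_{0,y}\mapsto\tfrac12(c_{0,y}-c_{0,-y})$ in the proof of that lemma together with the $\tfrac12$ in the Dirichlet-form normalization \eqref{DirichletProp}, and it is precisely where the symmetric, finite-range structure of the conductance field is used; for the one-dimensional nearest-neighbour tagged particle no such gap is available, which is the mechanism behind its sub-diffusivity. (For $\rho\in\{0,1\}$ one has $\phi\equiv0$, hence $\tilde M\equiv0$, and at $\rho=0$ even $X_t\equiv0$, so the non-degeneracy statement is of interest for $\rho\in(0,1)$.)
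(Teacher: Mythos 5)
Your strategy --- showing that the corrector martingale $\tilde M_1\cdot l$ is strictly shorter in $L_2(\nu_\rho)$ than $M_1\cdot l$ and invoking the reverse triangle inequality --- has a genuine gap, and in fact cannot work from the inputs you use. The quantitative step rests on \eqref{M1lambda}, but that identity is off by a factor of $2$: for the Dynkin martingale of a stationary reversible process one has $\E_{\nu_\rho}[\tilde M_1(\lambda,l)^2]=\E_{\nu_\rho}[\mathcal{L}_{ew}f_\lambda^2-2f_\lambda\mathcal{L}_{ew}f_\lambda]=2\|f_\lambda\|_1^2$, which you can also read off from the paper's own quadratic-variation computation, where the $N^{y,z}$--part of \eqref{lambdaMartingale2} alone already contributes $2D_{se}(f_\lambda)$. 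With the correct factor your chain gives $\E_{\nu_\rho}[(\tilde M_1\cdot l)^2]\le 2K^2=v_l=\E_{\nu_\rho}[(M_1\cdot l)^2]$, and the reverse triangle inequality yields only $\ge 0$. This is not a constant that can be sharpened: equality in your Cauchy--Schwarz step is attained exactly when $f_\lambda(\tau_y\eta)-f_\lambda(\eta)\approx-(y\cdot l)$ on $\{c_{0,y}=1\}$ and $D_{se}(f_\lambda)\approx 0$, i.e.\ precisely the perfect-cancellation scenario one must exclude. A decisive sanity check: the one-dimensional nearest-neighbour tagged particle satisfies the exact analogues of \eqref{Dirichlet1} and of the (corrected) \eqref{M1lambda} with the same constants, yet is subdiffusive by Arratia's theorem, so no argument using only these inputs can establish non-degeneracy.

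The missing ingredient is the paper's second, genuinely different estimate \eqref{Dirichlet2}, namely $|\langle \phi\cdot l,f\rangle_{\nu_\rho}|\leq K\,D_{se}(f)^{1/2}$, a bound by the \emph{exclusion} part of the Dirichlet form alone, obtained by writing $\eta(y)-\eta(-y)$ as a telescoping sum along a path of admissible exclusion jumps; this is where irreducibility of $p(\cdot)$ enters and exactly what fails for the one-dimensional nearest-neighbour tagged particle. The paper then expresses the variance via \eqref{lambdaMartingale2} and orthogonality of the jump martingales as $2D_{se}(f_\lambda)+\E_{\nu_\rho}\big[\sum_y c_{0,y}(\eta)\{(y\cdot l)+f_\lambda(\tau_y\eta)-f_\lambda(\eta)\}^2\big]$ and runs a dichotomy: if $D_{ew}(f_\lambda)$ stays bounded away from $0$, then \eqref{sandwich} forces $D_{se}(f_\lambda)$ to stay bounded away from $0$ and the first term survives; if $D_{ew}(f_\lambda)\to 0$, then $\tilde M_1(\lambda,l)\to 0$ in $L_2$ and the limiting variance is $\E_{\nu_\rho}[(M_1\cdot l)^2]=v_l>0$. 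Your computation of $v_l$ and your remark that the statement is only of interest for $\rho\in(0,1)$ are correct, but the heart of the proof is \eqref{Dirichlet2}, which your argument does not use.
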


\begin{proof}
For $z,y\in\Z^d$ with $p(z-y)>0$, let $I_t^{y,z}$ denote the total number of
jumps of particles from $y$ to $x$ up to time
$t$. Similarly to \eqref{CompoundPP}, by compensating the process $I_t^{y,z}$ by its intensity,
it is standard to check that
\begin{equation}\label{CompoundPP2}
N^{y,z}_t:=I_t^{y,z}-p(z-y)t\quad\text{and}\end{equation}
\begin{equation}\label{CompoundPP3}(N^{y,z}_t)^2-p(z-y)t
\end{equation} are martingales.

In particular, the martingales $\{M^{y}_t |\,
y \in\Z^d\}$ (recall  \eqref{CompoundPP}) and $\{N^{y,z}_t |\,
y,z\in\Z^d, p(z-y)>0\}$ are jump processes which do not have common jumps. Therefore they are
orthogonal, namely, the product of
two such martingales is still a martingale.

On the other hand, we can check that the martingale in \eqref{lambdaMartingale} can be expressed as
\begin{equation}\label{lambdaMartingale2}
\begin{aligned}
\tilde{M}_t(\lambda,l)&= \sum_{y,z} \int_0^t d N^{y,z}_s \left[f_\lambda(\eta_s^{y,z})
-f_\lambda(\eta_s)\right]\\
&+\sum_y \int_0^t d M^{y}_s \left[f_\lambda(\tau_{y}\eta_s)-f_\lambda(\eta_s)\right].
\end{aligned}
\end{equation}

Since $M_t, \tilde{M}_t$ are mean-zero square integrable martingales with stationary increments, to prove that
$M_t+\tilde{M}_t$ is a non-degenerate martingale, we show that for any vector $l\in\R^d$,
\begin{equation}\label{NonDeg}\E_{\nu_{\rho}}\left[(M_1\cdot l + \tilde{M}_1\cdot l)^2\right]>0.
\end{equation}

By using \eqref{lambdaMartingale2}, the orthogonality and the form
of the quadratic variations of $M^{y}_t$ and $N^{y,z}_t$(see
\eqref{CompoundPP} and \eqref{CompoundPP3}), and
\eqref{DirichletProp}, we have that
\begin{equation}\begin{aligned}\E_{\nu_{\rho}}\left[(M_1\cdot l + \tilde{M}_1\cdot l)^2\right]&=
\lim_{\lambda\rightarrow 0}\E_{\nu_{\rho}}\left[\left(M_1\cdot l + \tilde{M}_1(\lambda,l)\right)^2\right]
\\
&=\lim_{\lambda\rightarrow 0}\E_{\nu_{\rho}}\left[
\left(\int_0^1 \sum_{y,z}
\big[f_{\lambda}(\eta_s^{y,z})-f_{\lambda}(\eta_s)\big]d N^{y,z}_s \right)^2\right]\\
&+\lim_{\lambda\rightarrow 0}\E_{\nu_{\rho}}\left[ \left(\int_0^1
\sum_{y} \big\{(y\cdot
l)+\left[f_{\lambda}(\tau_y\eta_s)-f_{\lambda}(\eta_s)\right]
\big\}d M^{y}_s \right)^2\right]
\\
&=\lim_{\lambda\rightarrow
0}2D_{se}(f_{\lambda})
\\&+\lim_{\lambda\rightarrow 0}\E_{\nu_{\rho}}\left[
\sum_{y} c_{0,y}(\eta)
\big\{(y\cdot l)+\left[f_{\lambda}(\tau_y\eta)-f_{\lambda}(\eta)\right]\big\}^2\right].
\end{aligned}
\end{equation}

Hence, to conclude \eqref{NonDeg}, we argue a follows. Assume that
there exists a constant $K>0$ such that

\begin{equation}\label{Dirichlet2}|\langle \phi\cdot l,f_{\lambda}\rangle_{\nu_\rho}|\leq K\,
D_{se}(f_{\lambda})^{1/2},\end{equation}

then
\begin{equation}\label{sandwich}
D_{ew}(f_{\lambda})\leq |\langle
\phi\cdot l,f_{\lambda}\rangle_{\nu_\rho}|\leq K\,
D_{se}(f_{\lambda})^{1/2},
\end{equation}
where the first inequality follows by $D_{ew}(f_{\lambda})\leq
D_{ew}(f_{\lambda})+\lambda|\langle
f_{\lambda},f_{\lambda}\rangle_{\nu_\rho}|=|\langle
\phi\cdot l,f_{\lambda}\rangle_{\nu_\rho}|$.

In view of \eqref{sandwich}, if $D_{ew}(f_{\lambda})$ stays
positive in the limit as $\lambda\rightarrow 0$, the same holds
for $D_{se}(f_{\lambda})$ and the variance is positive. On the
other hand, if $D_{ew}(f_{\lambda})$ vanishes, then (recall
\eqref{M1lambda}),
$\E_{\nu_{\rho}}[\tilde{M}_1(\lambda,l)^2]=D_{ew}(f_{\lambda})\rightarrow
0$ and the limit variance is just $\E_{\nu_{\rho}}\left[(M_1\cdot
l)^2\right]>0.$

It remains to show the claim in \eqref{Dirichlet2}. Indeed, for an
arbitrary $f$, we can estimate

\begin{equation}\label{est1}
\begin{aligned}
|\langle \phi\cdot l,f\rangle_{\nu_\rho}|&=\left|\frac{1}{2}\int d\nu_\rho
\sum_y (y\cdot l)
\left[c_{0,y}(\eta)-c_{0,-y}(\eta)\right]f(\eta)\right|\\
&=\left|\frac{1}{2} \sum_{|y|_1\leq R} \int d\nu_\rho (y\cdot l) \eta(0)
\left[\eta(y)-\eta(-y)\right]f(\eta)\right|\\
&\leq \frac{1}{2}\sum_{|y|_1\leq R} |y\cdot l|\left|\int d\nu_\rho
\left[\eta(y)-\eta(-y)\right]f(\eta)\right|.
\end{aligned}
\end{equation}

Note that due to the irreducibility of $p(\cdot)$, for any
$y\in\Z^d$ with $|y|_1\leq R$, we can write
$$\eta(y)-\eta(-y)=\sum_{i=1}^n [\eta(z_i)-\eta(z_{i-1})] $$
for some sequence $(z_0=y,z_1,\ldots,z_n=-y)$, with
$p(z_{i}-z_{i-1})>0$ for $i=1,\ldots,n$.
Moreover

\begin{equation}\label{est2}
\begin{aligned}
&\left|\int d\nu_\rho \left[\eta(z_i)-\eta(z_{i-1})\right]f(\eta)\right|=
\left|\int d\nu_\rho \,\eta(z_{i-1})\left[f(\eta^{z_{i-1},z_i})-f(\eta)\right]\right|\\
&\leq \rho\left(\int d\nu_\rho \left[f(\eta^{z_{i-1},z_i})-f(\eta)\right]^2\right)^{1/2}
 \leq p(z_i-z_{i-1})^{-1/2} D_{se}(f)^{1/2}.
\end{aligned}
\end{equation}
Combining \eqref{est1} and \eqref{est2}, we obtain
\eqref{Dirichlet2} which concludes the proof.
\end{proof}

\subsection{Concluding remarks}

\begin{remark}{\bf{(On the tagged particle in symmetric exclusion)
}}\newline In the original paper by Kipnis-Varhadan, the authors
used their general theorem to show the diffusivity of a tagged
particle in the symmetric exclusion process in any dimension. An
exceptional case is when the symmetric exclusion is nearest-neighbor
and one-dimensional, which has been shown to be sub-diffusive \cite{Ar83}
due to the ``traffic jam'' created by the other particles in the
system. In particular, in this latter context, the analogous two
martingales involved in \eqref{RWrepresentation2} do annihilate
each other. In fact, the crucial estimate in \eqref{Dirichlet2}
does not hold.
\end{remark}

\begin{remark}{\bf{(Particle systems as non-elliptic dynamical
random conductances)}}\newline  The model we introduced is an
example of time-dependent random conductances, non-elliptic from
below, but bounded from above since $c_{\{x,y\}}(t)\in\{0,1\}$. In
a similar fashion, we can interpret more general particle systems
as models of non-elliptic dynamical random conductances, even
unbounded from above. This can be done by considering a particle
system $\xi_t\in\N^{\Z^d}$ and again setting
$c_{x,y}(t)=\xi_t(x)\xi_t(y)$ (e.g. a Poissoinian field of
independent random walks), provided that the particle system has
``well behaving" space-time correlations and good spectral
properties. Furthermore, in principle, Theorem
\ref{Functional CLT} can be pushed to obtain the analogous
quenched statement. We plan to address these natural
generalizations in future work.
\end{remark}


\begin{thebibliography}{99}
\bibitem{An12}
S.\ Andres, Invariance principle for the random conductance model with
dynamic bounded conductances. {\em Preprint, available at arXiv:1202.0803} (2012).

\bibitem{Ar83}
R.\ Arratia, The motion of a tagged particle in the simple symmetric exclusion system on $\Z^1$.
{\em Ann. Probab.} 11, 362--373 (1983).

\bibitem{Bi11}
M.\ Biskup, Recent progress on the random conductance model,
{\em Probability Surveys} 8, 294--273 (2011).

\bibitem{KiLa99}
O.\ Kipnis and C.\ Landim, \emph{Scailing limits of particle systems}, Springer-Verlag Berlin Heidelberg (1999).

\bibitem{KiVa86}
O.\ Kipnis and S.\ Varadhan, Central limit theorem for additive functionals of reversible Markov processes
and applications to simple exclusions. {\em Commun.\ Math.\ Phys. } 104, 1--19 (1986).

\bibitem{Li85}
T.M.\ Liggett, \emph{Interacting Particle Systems}, Grundlehren
der Mathematischen Wissenschaften 276, Springer, New York (1985).

\bibitem{Sz02}
A.S.\ Sznitman, Lectures on random motions in random media, in:
\emph{Ten Lectures on Random Media}, DMV-Lectures 32.
Birkh\"auser, Basel (2002).

\bibitem{Ze06}
O.\ Zeitouni, Random walks in random environments, {\em J.\ Phys.\ A:
Math.\ Gen.\ } 39, 433--464 (2006).

\end{thebibliography}
\end{document}